\let\NAT@parse\undefined
\newcommand{\R}{\mathbb{R}}
\newcommand{\Mtt}{\ensuremath{M_{\text{TT}}}\xspace}
\newcommand{\Met}{\ensuremath{M_{\text{ET}}}\xspace}
\newcommand{\Mbus}{\ensuremath{\mathcal{M}_{\text{Bus}}(k)}\xspace}
\newcommand{\Tdw}{\ensuremath{T_{dw}}\xspace}
\newcommand{\Tdws}{\ensuremath{T_{dw}^*}\xspace}
\renewcommand{\eth}{\ensuremath{e_{\text{th}}}\xspace}
\newcommand{\qq}{\ensuremath{q^{-1}}}
\newcommand{\qn}[1]{\ensuremath{q^{-#1}}}
\newcommand{\yref}{\ensuremath{y_{\text{ref}}}\xspace}
\newcommand{\yrefp}{\ensuremath{y_{\text{ref}}'}\xspace}
\newcommand{\NN}{\ensuremath{\mathbb{N}}}
\newcommand{\defi}{\ensuremath{\mathrel{\mathop:}=}\xspace}
\newlength{\coorthwidth}
\newlength{\widthsim}
\newdimen\simdimen
\newdimen\widthdimen
\newdimen\abstand
\def\coorth#1{
\settowidth{\coorthwidth}{\ensuremath{#1}}
\widthdimen=\coorthwidth
\advance\widthdimen by 5pt
\divide\widthdimen by \simdimen
\abstand=\widthdimen
\multiply\abstand by 2
\mkern-\number\abstand mu\ensuremath{\stackrel{#1}{\scalebox{\number\widthdimen}[1]{$\sim$}}}\mkern-\number\abstand mu
}
\DeclareMathOperator*{\rank}{rank}
\DeclareMathOperator*{\dimm}{dim}
\DeclareMathOperator*{\imm}{Im}
\newtheorem{thm}{Theorem}
\newtheorem{lemma}{Lemma}
\newtheorem{cor}{Corollary}
\newtheorem{ass}{Assumption}
\newtheorem{definition}{Definition}
\newcounter{stepstages}
\newcounter{stepproof}	
\newenvironment{proofstep}[1]{\stepcounter{stepproof}\noindent\textbf{Step \arabic{stepstages}-\arabic{stepproof}:} \textit{#1}

\underline{Proof of Step \arabic{stepstages}-\arabic{stepproof}:}}{}
\begin{document}

\bstctlcite{removedashes:BSTcontrol}

\title{Adaptive Switching Controllers for Tracking using Persistent Excitation}

\author{Harald Voit, Anuradha Annaswamy}%

\maketitle

\ifthenelse{\boolean{printheader}\OR\boolean{printsubmitted}}{\thispagestyle{fancy}}{}

\begin{abstract}
The focus of this paper is on the co-design of control and communication protocol for the control of multiple applications with unknown parameters using a distributed embedded system. The co-design consists of an adaptive switching controller and a hybrid communication architecture that switches between a time-triggered and event-triggered protocol. It is shown that the overall co-design leads to an overall switching adaptive system that has bounded solutions and ensures tracking in the presence of a class of disturbances.
\end{abstract}

\section{Introduction}
\label{sec:introduction}
Embedded control systems are ubiquitous and can be found in several applications including aircraft, automobiles, process control, and buildings. An embedded control system is one in which the computer system is designed to perform dedicated functions with real-time computational constraints \citep{Liu2000}. Typical features of such embedded control systems are the control of multiple applications, the use of shared networks used by different components of the systems to communicate with each other for control, a large number of sensors as well as actuators, and their distributed presence in the overall system. 

The most common feature of such distributed embedded control systems (DES) is  shared resources. Constrained by space, speed, and cost, often information has to be transmitted using a shared communication network. In order to manage the flow of information in the network, protocols that are time-triggered \cite{Nghiem2006} and event-triggered \cite{Cervin2006,Wang2011,Tabuada2007} have been suggested over the years. Associated with each of these communication protocols are different set of advantages and disadvantages. The assignment of time-triggered (TT) slots to all control-related signals has the advantage of high quality of control (QoC) due to the possibility of reduced or zero delays, but leads to poor utilization of the communication bandwidth, high cost, overall inflexibility, and infeasibility as the number of control applications increase. On the other hand, event-triggered (ET) schedules often result in poor control performance due to the unpredictable temporal behavior of control messages and the related large delays which occurs due to the lack of availability of the bus. These imply that a hybrid protocol that suitably switches between these two schedules offers the possibility of exploiting their combined advantages of high QoC, efficient resource utilization, and low cost \cite{Masrur2011}. 
Such a hybrid protocol is the focus of this paper. To combine the advantage of TT and ET policies, hybrid protocols are increasingly being studied in recent years. Examples of such protocols are FlexRay and TTCAN \cite{Albert2004,Talbot2009}, used extensively in automotive systems.

While several papers have considered control using TT protocols (see for example, \cite{Nghiem2006,Palopoli2005}) and ET protocols (see for example, \cite{Wang2011,Tabuada2007}), control using hybrid protocols has not been studied in the literature until recently. The co-design problem has begun to be addressed of late as well (see for example, \cite{Arzen2005b,Branicky2002,Cervin2003,Marti2004,Seto1996,Abeni2000,Naghshtabrizi2009,Samii2009}). In \cite{Seto1996,Abeni2000,Naghshtabrizi2009,Samii2009}, the design of scheduling policies that ensure a good Quality of Control (QoC) is addressed. In \cite{Seto1996}, the schedulability analysis of real-time tasks with respect to the stability of control functions is discussed. In \cite{Abeni2000}, modeling the real-time scheduling process as a dynamic system, an adaptive self-tuning regulator is proposed to adjust the bandwidth of each single task in order to achieve an efficient CPS utilization. The focus of most of the papers above are either on a simple platform or on a single processor. A good survey paper on co-design can be found in~\cite{Xia2006}. Our focus in this paper is on the co-design of adaptive switching controllers and hybrid protocols so as to ensure good tracking in the presence of parametric uncertainties in the plant being controlled while utilizing minimal resources in the DES.

The hybrid protocol that is addressed in this paper switches between a TT and a ET scheme. The TT scheme, which results in a negligible delay in the processing of the control messages, is employed when a control action is imperative and the ET scheme, which typically results in a non-zero delay, is employed when the controlled system is well-behaved, with minimal tracking error. The latter is in contrast to papers such as \cite{Wang2011,Tabuada2007} and \cite{Velasco2008} where the underlying \emph{event} is associated with a system error exceeding a certain threshold, while here an \emph{event} corresponds to the case when the system error is small. The controller is to be designed for multiple control applications, each of which is subjected to a parametric uncertainty. An adaptive switching methodology is introduced to accommodate these uncertainties and the hybrid nature of the protocol.

Switched control systems and related areas of hybrid systems and supervisory control have received increased attention in the last decade (see e.g., \cite{Narendra1997,Branicky1998,Liberzon1999,Hespanha1999,Narendra2000,Liberzon2003,McCourt2010,Rajhans2009}) and used in several applications (see e.g. \cite{Gao2007,Xie2010,McClamroch1996,Johansson2003}). Adaptive switched and tuned systems have been studied as well (see \cite{Narendra1997,Narendra2000,Morse1996}). The combined presence of uncertainties and switching delays makes a direct application of these existing results to the current problem inadequate. 

The solution to the problem of co-design of an adaptive swtiched controller and switches in a hybrid protocol was partially considered in \cite{Voit2012}, where the control goal was one of stabilization. In this paper, we consider tracking, which is a non-trivial extension of \cite{Voit2012}. The main reason for this lies in the trigger for the switch, which corresponds to a system error becoming small. In order to ensure that this error continues to remain small even in the presence of a non-zero reference signal, we needed to utilize fundamental properties of  the adaptive system with persistent excitation, and derive additional properties in the presence of reference signals with an invariant persistent excitation property. These properties in turn are suitably exploited and linked with the switching instants, and constitute the main contribution of this paper.

In Section \ref{sec:problem} the problem is formulated, and preliminaries related to adaptive control and persistent excitation are presented. In Section \ref{sec:switchingadaptivecontroller}, the switching adaptive controller is described and the main result of global boundedness is proved. Concluding remarks are presented in section \ref{sec:conclusion}.

\section{Problem Formulation}
\label{sec:problem}

\subsection{The plant model}\label{sec:plantmodel}

The problem that we address in this paper is the simultaneous control of $n$ plants, $C_i$, $i=1,\ldots,n$, in the presence of impulse disturbances that occur sporadically, using a hybrid communication protocol. We assume that each of these $n$ applications have the following problem statement.

The plant to be controlled is assumed to have a discrete time model described by
\begin{multline}
	\label{eq:model}
	\mathcal{C}_i:\;y(k)=-\sum_{l=1}^{m_1}a_ly(k-l)\\[-2.0ex]
	+b_0u(k-d)+\sum_{l=1}^{m_2}b_lu(k-l-d)+D(k-d)
\end{multline}
where $u(k)$ and $y(k)$ are the input and output of the $i$-th control application, respectively, at the time-instant $t_k$ and $d\geqslant 1$ is a time-delay. The disturbance $D(k)$ are assumed to be impulses that can occur occasionally with their inter-arrival time lower-bounded by a finite constant. The parameters of the $i$-th plant are given by $a_l$, $l=1,\ldots,m_1$, $b_l$,$l=0,\ldots,m_2$ and are assumed to be unknown. It is further assumed that the sampling time of the controller is a constant $h$, so that $t_{k+1}=t_k+h$. The goal is to choose the control input $u$ such that $y(k)$ tracks a desired signal $\yref(k)$, with all signals remaining bounded.

The model in (\ref{eq:model}) can be expressed as
\begin{equation}
\label{eq:leftdiff}
A(\qq)y(k)=\qn{d}(B(\qq)u(k)+D(k));\qquad k\geq 0
\end{equation}
where \qq\ is the backward shift operator and the polynomials $A$ and $B$ are given by
\begin{equation}
\label{eq:polyAB}
\begin{split}
A(\qq)=1+\sum_{l=1}^{m_1}a_l\qn{l}\qquad B(\qq)=b_0+\sum_{l=1}^{m_2}b_l\qn{l}
\end{split}
\end{equation}
The following assumptions are made regarding the plant poles and zeros:
\begin{ass}
1) An upper bound for the orders of the polynomials in (\ref{eq:polyAB}) is known and 2) all zeros of $B_i(\qq)$ lie strictly inside the closed unit~disk.
\label{ass:fixeddelay}
\end{ass}

For any delay $d$, Eq. (\ref{eq:model}) can be expressed in a \emph{predictor form} as follows \cite{Goodwin1984}:
\begin{equation}
\label{eq:predictorform}
y(k+d)=\alpha(\qq)y(k)+\beta(\qq)u(k)+D(k)
\end{equation}
with
\begin{equation}
\label{eq:alphabeta}
\begin{split}
\alpha(\qq)&=\alpha_0+\alpha_1\qq+\ldots+\alpha_{m_1-1}\qn{(m_1-1)}\\
\beta(\qq)&=F(\qq)B(\qq)\\
&=\beta_0+\beta_1\qq+\ldots+\beta_{m_2+d-1}\qn{(m_2+d-1)}
\end{split}
\end{equation}
where $F(\qq)$ and $\alpha(\qq)$ are the unique polynomials that satisfy the equation
\begin{equation}
\label{eq:diophantine}
1=F(\qq)A(\qq)+\qn{d}\alpha(\qq).
\end{equation}

Equation (\ref{eq:predictorform}) can be expressed as 
\begin{align}
  \label{eq:regressor}
  y(k+d)&=\theta_d^{*T}\Phi_d(k)+D(k)\\
  &=\vartheta_d^*\phi_d(k)+\beta_0^du(k)+D(k)\label{eq:thetaregressor}
\end{align}
where $\phi_d(k)$, $\vartheta_d^*$, $\Phi_d(k)$, and $\theta_d^*$ are defined as
\begin{equation}
  \label{eq:phiandthetaklein}
  \phi_d(k)=\begin{bmatrix}y(k)\\ \vdots\\y(k-m_1+1)\\u(k-1)\\\vdots\\u(k-m_2-d+1)\end{bmatrix}
  \quad
  \vartheta_d^*=\begin{bmatrix}\alpha_0^d\\\vdots \\\alpha_{m_1-1}^d\\\beta_1^d\\\vdots\\\beta_{m_2+d-1}^d\end{bmatrix}
\end{equation}
\begin{equation}
  \label{eq:phiandtheta}
  \Phi_d(k)=\begin{bmatrix}\phi_d(k)\\u(k)\end{bmatrix}
  \text{ and }
  \theta_d^*=\begin{bmatrix}\vartheta_d^*\\\beta_0^d\end{bmatrix}
\end{equation}
with $\phi_d(k)\in\mathbb{R}^{m_1+m_2+d-1}$, $\vartheta_d^*\in\mathbb{R}^{m_1+m_2+d-1}$, $\Phi_d(k)\in\mathbb{R}^{m_1+m_2+d}$, $\theta_d^*\in\mathbb{R}^{m_1+m_2+d}$, and $\alpha_j^d,j=0,\ldots,m_1-1$ and $\beta_j^d,j=0,\ldots,m_2+d-1$ the coefficients of the polynomials in (\ref{eq:alphabeta}) with respect to the delay $d$ and finite initial conditions
\begin{equation}
\begin{split}
y(k-i)&=y_0(i)\quad i=0,\ldots,m_1-1,\\
u(k-i)&=u_0(i)\quad i=1,\ldots,m_2+d-1.
\end{split}
\label{eq:initialconditions}
\end{equation}

From Eqs. (\ref{eq:regressor})-(\ref{eq:phiandtheta}), we observe that a feedback controller of the form
\begin{equation}
  \label{eq:feedbackcontroller}
  u(k)=\frac{1}{\beta_0^d}\left(\yref(k+d)-\vartheta_d^{*T}\phi_d(k)\right)
\end{equation}
realizes the objective of stability and follows the desired bounded trajectory $\yref(k)$ in the absence of disturbances. Designing a stabilizing controller $u(k)$ essentially boils down to a problem of implementing (\ref{eq:feedbackcontroller}) with the controller gain $\vartheta_d^*$. Two things should be noted: (i) Controller (\ref{eq:feedbackcontroller}) is not realizable as $\vartheta_d^*$ and $\beta_0^d$ are not known, and (ii) the dimension of $\phi_d(k)$, $\vartheta_d^*$ as well as the entries of $\vartheta_d^*$ depend on the delay~$d$. 

\subsection{Baseline adaptive controller}
\label{sec:adaptivecontrollerdesign}
Since $\vartheta_d^*$ and $\beta_0^d$ are unknown, we replace them with their parameter estimates and derive the following adaptive control input
\begin{equation}
\label{eq:adaptivecontroller}
u(k)=\frac{1}{\hat\theta_{d,\nu}(k)}\left(\yref(k+d)-\hat\vartheta_d(k)^{T}\phi_d(k)\right)
\end{equation}
where $\hat\theta_{d,\nu}(k)$ denotes the $(m_1+m_2+d)$-th element of the parameter estimation $\hat\theta_d(k)$ and is the estimate of $\beta_0^d$. $\hat\theta_d(k)$ is adjusted according to the adaptive update law \cite{Goodwin1984}:
\begin{align}
\label{eq:updatelaw}
\hat\theta_d(k)&=\hat\theta_d(k-1)+\frac{a(k)\Phi_d(k-d)\varepsilon_d(k)}{1+\Phi_d(k-d)^T\Phi_d(k-d)}\\
\label{eq:avoidzero}a(k)&=\begin{cases}
1&\begin{minipage}{6cm}
if $\nu$-th element of right-hand side of (\ref{eq:updatelaw}) evaluated using $a(k)=1$ is $\neq 0$
\end{minipage}\\
\gamma_d&\text{otherwise, where }0<\gamma_d<2,\gamma_d\neq 1
\end{cases}\\
\varepsilon_d(k)&=y(k)-\hat\theta_d(k-1)^T\Phi_d(k-d),\label{eq:updateepsilon}
\end{align}
with $\hat\theta_d(k)=\left[\hat\vartheta_d(k)^T\;\;\hat\theta_{d,\nu}(k)\right]^T$. Equation (\ref{eq:avoidzero}) is necessary to avoid division by zero in the control law (\ref{eq:adaptivecontroller}). Theorem \ref{thm:fixeddelay} addresses the stability of the adaptive system given by (\ref{eq:predictorform}), (\ref{eq:adaptivecontroller}), and (\ref{eq:updatelaw})-(\ref{eq:updateepsilon}). The reader is referred to Theorem~6.3.1 in \cite{Goodwin1984} or Theorem~5.1 in \cite{Goodwin1980} for the proof of Theorem \ref{thm:fixeddelay}.

\begin{thm}
\label{thm:fixeddelay}
Let $D(k)\equiv 0$. Subject to Assumption \ref{ass:fixeddelay} and given a fixed delay $d$, the adaptive controller (\ref{eq:adaptivecontroller}) with the update law (\ref{eq:updatelaw}) guarantees that the plant given by (\ref{eq:predictorform}) follows the reference \yref, i.e., $\lim_{k\to\infty}\left(y(k)-\yref(k)\right)=0$, and that the sequences $\{\hat\theta_d(k)\}$, $\{y(k)\}$ and $\{u(k)\}$ are bounded for all $k$.
\end{thm}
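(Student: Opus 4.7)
The plan is to follow the standard Goodwin--Sin framework for discrete-time direct adaptive control of minimum-phase plants, organized around a Lyapunov-type analysis of the parameter error, followed by a bootstrapping argument that uses the minimum-phase assumption to pass from normalized to unnormalized boundedness.

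First, I would define the parameter error $\tilde\theta_d(k) \defi \hat\theta_d(k) - \theta_d^*$ and note that, with $D(k)\equiv 0$, Eq. (\ref{eq:regressor}) combined with (\ref{eq:updateepsilon}) gives the identity $\varepsilon_d(k) = -\tilde\theta_d(k-1)^T \Phi_d(k-d)$. Taking $V(k) \defi \tilde\theta_d(k)^T \tilde\theta_d(k)$ and substituting the update law (\ref{eq:updatelaw}), a direct computation yields
\begin{equation*}
V(k) - V(k-1) \;=\; -\frac{a(k)\bigl(2-a(k)\bigr)\,\varepsilon_d(k)^2}{1 + \Phi_d(k-d)^T \Phi_d(k-d)}.
\end{equation*}
Since (\ref{eq:avoidzero}) ensures $a(k)\in(0,2)$, this difference is non-positive, so $V(k)$ is monotone and hence $\{\hat\theta_d(k)\}$ is bounded. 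Telescoping and using $V(k)\geq 0$ gives square-summability of the normalized prediction error,
\begin{equation*}
\sum_{k=0}^{\infty} \frac{\varepsilon_d(k)^2}{1+\|\Phi_d(k-d)\|^2} < \infty,
\end{equation*}
so that $\varepsilon_d(k)/\sqrt{1+\|\Phi_d(k-d)\|^2} \to 0$ and, from (\ref{eq:updatelaw}), $\hat\theta_d(k)-\hat\theta_d(k-1)\to 0$.

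The main obstacle is then the bootstrapping step: normalized convergence of $\varepsilon_d$ must be upgraded to actual boundedness of $\Phi_d$. Here I would invoke the key technical lemma for adaptive systems: define $N(k) \defi \max_{j\leq k}\|\Phi_d(j-d)\|$ and express $y(k)$ via (\ref{eq:thetaregressor}) together with the certainty-equivalence law (\ref{eq:adaptivecontroller}), which yields $y(k) - \yref(k) = \varepsilon_d(k) + (\hat\theta_d(k-1)-\hat\theta_d(k-d-1))^T \Phi_d(k-d)$ up to terms that vanish because parameter increments go to zero. The minimum-phase property in Assumption \ref{ass:fixeddelay} is essential precisely here: it lets one invert $B(\qq)$ as a stable filter, so that $u(k)$ is bounded by past values of $y$ and $\yref$. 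One then obtains an estimate of the form $N(k) \leq c_1 + c_2 \max_{j\leq k}|\varepsilon_d(j)|$ with $c_2$ strictly less than what the normalized-error decay permits asymptotically; a contradiction argument (assuming $N(k)\to\infty$ along a subsequence) then forces $N(k)$ to be bounded.

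With $\{\Phi_d(k)\}$ bounded, the normalized-error bound gives $\varepsilon_d(k)\to 0$, and the identity $y(k) - \yref(k) = \varepsilon_d(k) + (\hat\theta_d(k-1)-\hat\theta_d(k-d-1))^T\Phi_d(k-d)$, combined with $\hat\theta_d(k)-\hat\theta_d(k-1)\to 0$ and bounded $\Phi_d$, yields $\lim_{k\to\infty}(y(k)-\yref(k))=0$. Boundedness of $\{u(k)\}$ follows from the control law (\ref{eq:adaptivecontroller}), bounded $\hat\theta_d$, bounded $\phi_d$, and the fact that $\hat\theta_{d,\nu}(k)$ is kept away from zero by (\ref{eq:avoidzero}). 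The hard part throughout is the technical lemma in step three; everything else is a mechanical consequence of the update law's structure.
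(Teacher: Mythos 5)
The paper does not prove Theorem \ref{thm:fixeddelay} itself but defers to Theorem~6.3.1 of Goodwin--Sin (equivalently Theorem~5.1 of Goodwin--Ramadge--Caines), and your sketch reproduces exactly that argument: the non-increasing Lyapunov function for $\tilde\theta_d$, square-summability of the normalized error, the Key Technical Lemma bootstrapped through the minimum-phase (stable invertibility of $B(\qq)$) condition, and the final passage to $y(k)-\yref(k)\to 0$. This is correct and is essentially the same approach as the cited proof; the only blemishes are cosmetic (your $\Delta V$ expression is an upper bound rather than an equality, and the parameter-increment term should involve $\hat\theta_d(k-d)$ rather than $\hat\theta_d(k-d-1)$), neither of which affects the argument.
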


\subsection{Persistent excitation and sufficient richness}

The following definitions related to persistent excitation are needed to introduce our switching controller. We define the terms \emph{persistently exciting} and \emph{sufficiently rich} in the following way:

\begin{definition}[\cite{Bai1985}]\label{def:pe}
A sequence $x(t)\in\R^n$ is said to be \emph{persistently exciting (PE) (in $N$ steps)}, if there exists $N\in\mathbb{Z}^+,\alpha>0$ such that
\begin{equation}
\sum_{t=t_0+1}^{t_0+N}x(t)x(t)^T\geqslant\alpha I
\end{equation}
uniformly in $t_0$.
\end{definition}

\begin{definition}[\cite{Bai1985}]\label{def:sr}
A sequence $x(t)\in\R^n$ is said to be \emph{sufficiently rich (SR) of order $m$ (in $N$ steps)}, if there exists $N\in\mathbb{Z}^+,\alpha>0$ such that
\begin{equation}
\sum_{t=t_0+1}^{t_0+N}\xi_m(t)\xi_m(t)^T\geqslant\alpha I
\end{equation}
with $\xi_m(t)=\begin{bmatrix}x(t+1)&x(t+2)&\cdots & x(t+m)\end{bmatrix}^T$ uniformly for all $t_0$.
\end{definition}

The following Lemma is useful to prove Theorem \ref{thm:persistentexcitation}.

\begin{lemma}[\cite{Bai1985}]
Suppose that $y_1(t)$ and $y_2(t)$ are two bounded sequences taking values in $\R^n$ satisfying $\sum_{t=1}^{\infty}\|y_1(t)-y_2(t)\|<\infty$. Then $y_1(t)$ is SR of order $p$ if and only if $y_2(t)$ is SR of order $p$.
\end{lemma}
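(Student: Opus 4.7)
The plan is a summable-perturbation argument that exploits the symmetry of the hypothesis in $y_1$ and $y_2$, so it suffices to prove that SR of $y_2$ implies SR of $y_1$. Set $\delta(t) = y_1(t) - y_2(t)$ and form the stacked differences $\Delta(t) = \xi_p^{(1)}(t) - \xi_p^{(2)}(t)$, where $\xi_p^{(i)}$ denotes the sliding window built from $y_i$ as in Definition~\ref{def:sr}. By the triangle inequality $\|\Delta(t)\| \le \sum_{j=1}^p \|\delta(t+j)\|$, and since $\sum_t \|\delta(t)\|<\infty$ this gives $\sum_t \|\Delta(t)\|<\infty$; boundedness of the $y_i$ also yields $\sum_t \|\Delta(t)\|^2 < \infty$.

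Next I would expand the outer-product difference
\[
E(t) \defi \xi_p^{(1)}(t)\xi_p^{(1)}(t)^T - \xi_p^{(2)}(t)\xi_p^{(2)}(t)^T = \xi_p^{(2)}(t)\Delta(t)^T + \Delta(t)\xi_p^{(2)}(t)^T + \Delta(t)\Delta(t)^T,
\]
and use boundedness of $\xi_p^{(2)}$ to bound its spectral norm by $c\|\Delta(t)\| + \|\Delta(t)\|^2$ for some constant $c$. Summability of this bound implies that for every $\varepsilon>0$ there is an $M$ such that $\sum_{t \ge M+1} \|E(t)\| < \varepsilon$, so for every window $[t_0+1,t_0+N]$ with $t_0 \ge M$ the perturbation $\sum_t E(t)$ has spectral norm less than $\varepsilon$, i.e.\ it dominates $-\varepsilon I$ in the Loewner order.

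Assume now that $y_2$ is SR with constants $(N,\alpha)$. Choosing $\varepsilon = \alpha/2$ yields an $M$ for which
\[
\sum_{t=t_0+1}^{t_0+N} \xi_p^{(1)}(t)\xi_p^{(1)}(t)^T \succeq \alpha I - \tfrac{\alpha}{2} I = \tfrac{\alpha}{2}\, I \qquad \text{for every } t_0 \ge M.
\]
To handle the finitely many remaining $t_0 < M$, I would enlarge the window by setting $N' = N + M$: the interval $[t_0+1,t_0+N']$ then contains $[M+1,M+N]$ for every $t_0 \ge 0$, and since each summand $\xi_p^{(1)}(t)\xi_p^{(1)}(t)^T$ is positive semidefinite, monotonicity in the Loewner order gives $\sum_{t=t_0+1}^{t_0+N'} \xi_p^{(1)}(t)\xi_p^{(1)}(t)^T \succeq \tfrac{\alpha}{2} I$. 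This establishes SR of $y_1$ with constants $(N',\,\alpha/2)$, and the converse follows by swapping the roles of $y_1$ and $y_2$.

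The main obstacle is the uniform-in-$t_0$ clause in Definition~\ref{def:sr}: absolute summability of the perturbation only directly delivers the desired lower bound once $t_0$ is past a threshold $M$, and the window-enlargement device above is the key to absorbing the finitely many small $t_0$ into a single pair of SR constants.
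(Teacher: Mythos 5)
The paper itself gives no proof of this lemma --- it is imported verbatim from \cite{Bai1985} --- so there is nothing internal to compare against; judged on its own, your summable-perturbation argument is correct and is essentially the standard proof of this fact. The decomposition $E(t)=\xi_p^{(2)}(t)\Delta(t)^T+\Delta(t)\xi_p^{(2)}(t)^T+\Delta(t)\Delta(t)^T$, the summability of $\|E(t)\|$ from boundedness plus $\sum_t\|\delta(t)\|<\infty$, and the Loewner-order bound $\sum_t E(t)\succeq-\varepsilon I$ on tail windows are all sound, and you correctly identified the real subtlety: the finitely many windows with $t_0<M$ cannot simply be handled by taking a minimum eigenvalue, since those partial sums may be singular, so the window enlargement to $N'=N+M$ is genuinely needed. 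One small imprecision: $[t_0+1,t_0+N']$ does \emph{not} contain $[M+1,M+N]$ when $t_0>M$; the statement you want is that for every $t_0\geq 0$ the enlarged window contains $[s_0+1,s_0+N]$ with $s_0=\max(t_0,M)\geq M$, after which positive semidefiniteness of the summands and the tail bound give $\sum_{t=t_0+1}^{t_0+N'}\xi_p^{(1)}(t)\xi_p^{(1)}(t)^T\succeq\tfrac{\alpha}{2}I$ uniformly. With that one-line repair the proof is complete.
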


The reader is referred to \cite{Bai1985} for the proof of Theorem \ref{thm:persistentexcitation}.

\begin{lemma}\label{lem:reachability}
Consider the discrete time system 
\begin{equation}\label{eq:reachable}
X(t+1)=AX(t)+BU(t)
\end{equation}
with $X(t)\in\R^n$, $A\in\R^{n\times n}$, $B\in\R^{n\times l}$, and $U(t)\in\R^l$. Assume that (\ref{eq:reachable}) is completely reachable and that the input $U(t)$ is SR of order $1\leq p\leq n$. Then,
\begin{equation}\label{eq:rankstate}
\rank\left(\sum_{t=t_0+1}^{t_0+n+N}X(t)X(t)^T\right)=p
\end{equation}
for all $t_0\geqslant 0$.
\end{lemma}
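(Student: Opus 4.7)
The plan is to relate the rank of $\sum X(t)X(t)^T$ to the SR order of $U$ by exploiting complete reachability of $(A,B)$ as the bridge. First I would expand the state via the variation-of-parameters formula,
\begin{equation*}
X(t) = A^{t-t_0}X(t_0) + \sum_{j=t_0}^{t-1} A^{t-1-j}B\,U(j),
\end{equation*}
and stack these for $t\in\{t_0+1,\ldots,t_0+n+N\}$ into an expression of the form $\bar X_{t_0} = \mathcal{O}\,X(t_0) + \mathcal{T}\,\bar U_{t_0}$, with $\mathcal{T}$ the block lower-triangular Toeplitz map built from the impulse responses $A^j B$ and $\bar U_{t_0}$ the vertical stack of $U(t_0),\ldots,U(t_0+n+N-1)$.

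Next I would invoke complete reachability: the reachability matrix $\mathcal{C}_n\defi[B,\,AB,\,\ldots,\,A^{n-1}B]$ has full rank $n$, so the map $\mathcal{T}$ transfers any $p$ consecutive-input excitation directions to corresponding state directions, with the truncated map $\mathcal{C}_p$ capturing precisely the span that a length-$p$ input window can reach. Combining the SR hypothesis $\sum_{t=t_0+1}^{t_0+N}\xi_p(t)\xi_p(t)^T\geq \alpha I$ with this reachability transfer, the lower bound $\sum X(t)X(t)^T \succeq \gamma\,\Pi$ for a rank-$p$ projector $\Pi$ follows by grouping terms in the state Gram according to the corresponding $\xi_p$ windows. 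For the matching upper bound, I would run the same argument in reverse: a null direction of $\sum X(t)X(t)^T$ with codimension less than $p$ would, after inverting the block-Toeplitz map along the image of $\mathcal{C}_p$, force additional independent directions in $\xi_{p+1}$, contradicting that $U$ is SR of order exactly $p$.

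The hard part will be absorbing the transient term $A^{t-t_0}X(t_0)$, which does not vanish over a finite window and can in principle contribute rank not attributable to the input. This is precisely why the window length is $n+N$ rather than just $N$: the extra $n$ samples provide enough input-driven degrees of freedom, coming from the $n$ columns of $\mathcal{C}_n$, to dominate the transient and make the rank count uniform in $t_0$. The detailed bookkeeping that pins the rank exactly at $p$ — rather than merely at least $p$ — is the argument of \cite{Bai1985}, which is applied here with the preceding perturbation lemma to handle minor discrepancies between the ideal Toeplitz structure and the realized state trajectory.
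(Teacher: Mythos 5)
There is a genuine gap, and it sits exactly where you flagged it: the transient term. Your plan to write $X(t)=A^{t-t_0}X(t_0)+\sum_j A^{t-1-j}BU(j)$ and then argue that the extra $n$ samples in the window ``dominate'' the transient is not an argument that can be completed as stated. Over a finite window the homogeneous part $A^{t-t_0}X(t_0)$ neither decays nor is it majorized by the input-driven part; lengthening the window by $n$ does not make it go away, and ``grouping terms in the state Gram according to the $\xi_p$ windows'' does not produce the claimed lower bound $\sum X(t)X(t)^T\succeq\gamma\Pi$ because the cross terms between the transient and the forced response are uncontrolled. The device the paper uses to kill the transient is Cayley--Hamilton filtering: with $p(z)=z^n+a_1z^{n-1}+\cdots+a_n$ the characteristic polynomial of $A$, define $V(t)=X(t+n)+a_1X(t+n-1)+\cdots+a_nX(t)$. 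Since $p(A)=0$, the homogeneous contributions cancel identically and $V(t)=G\,[U^T(t),\ldots,U^T(t+n-1)]^T$ with $G$ built from $B,AB,\ldots,A^{n-1}B$; complete reachability gives $\sigma_{\min}(GG^T)=\gamma>0$, so the SR hypothesis transfers directly to $\sum_t V(t)V(t)^T\geqslant\alpha\gamma(I_p\oplus 0)$. One then writes $[V(t_0+1),\ldots,V(t_0+N)]=WP$ with $W=[X(t_0+1),\ldots,X(t_0+n+N)]$ and $P$ a banded Toeplitz matrix of the $a_i$, and divides by $\sigma_{\max}(PP^T)$ to get $WW^T\geqslant\frac{\alpha\gamma}{\sigma_{\max}(PP^T)}(I_p\oplus 0)$, i.e.\ rank at least $p$, uniformly in $t_0$ and independent of $X(t_0)$. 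This exact cancellation is the missing idea in your write-up; without it the lower bound does not follow.

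Two smaller points. For the matching upper bound the paper does not run the argument ``in reverse'': it simply invokes Theorem~1 of the cited reference, which says an input that is SR of order $p$ excites at most $p$ directions of the state space, so the rank cannot exceed $p$. Your proposed contradiction argument also quietly strengthens the hypothesis to ``SR of order \emph{exactly} $p$'' (i.e.\ not SR of order $p+1$), which is what is actually needed for equality but is not what you are given to work with in the reverse direction you sketch. Finally, deferring ``the detailed bookkeeping that pins the rank exactly at $p$'' to the reference is deferring the entire content of the lemma; the Cayley--Hamilton step above is short and should be carried out explicitly.
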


\begin{proof}
We first rewrite (\ref{eq:reachable}) as
\begin{equation}
X(t+j)=A^jX(t)+\sum_{i=1}^jA^{j-i}BU(t+i-1)
\end{equation}
and define the characteristic polynomial of $A$ to be
\begin{equation}
p(z)=z^n+a_1+z^{n-1}+\ldots+a_n
\end{equation}
and let
\begin{equation}
V(t)=X(t+n)+a_1X(t+n-1)+\ldots+a_nX(t).
\end{equation}
Then, from the Cayley-Hamilton theorem it follows that
\begin{equation}
V(t)=G \begin{bmatrix}U^T(t)&\ldots & U^T(t+n-1)\end{bmatrix}
\end{equation}
where $G=\begin{bmatrix}A^{n-1}B+a_1A^{n-2}B+\ldots+a_{n-1}B,\ldots,B\end{bmatrix}$. Let
\begin{equation}
Y(t_0)=\begin{bmatrix}V(t_0+1)&V(t_0+2)&\ldots&V(t_0+N)\end{bmatrix}.
\end{equation}
Then,
\begin{equation}\label{eq:kleinergamma}
Y(t_0)Y^T(t_0)=\sum_{t=t_0+1}^{t_0+N}V(t)V^T(t)\geqslant\alpha\gamma(I_p\oplus 0)
\end{equation}
where "$\oplus$" denotes the direct sum, $I_p$ is the $p\times p$ identity matrix, and $0<\gamma=\sigma_{\min}(GG^T)$ is the minimal singular value of $GG^T$. $Y(t_0)$ can be also stated in terms of $X(t)$ in the following way
\begin{align}
Y(t_0)&=\begin{bmatrix}X(t_0+1),X(t_0+2),\ldots,X(t_0+n+N)\end{bmatrix}P\\
&=WP
\end{align}
where $P\in\R^{(N+n)\times N}$ is given by
\begin{equation}
P=\begin{bmatrix}
a_n&0&\cdots&0\\
\vdots&a_n&\ddots&\vdots\\
1&\vdots&\ddots&0\\
0&1&&a_n\\
\vdots&\ddots&\ddots&\vdots\\
0&\cdots&0&1
\end{bmatrix}
\end{equation}
Then,
\begin{equation}
Y(t_0)Y^T(t_0)=WPP^TW^T\leqslant\sigma_{\max}(PP^T)WW^T
\end{equation}
and hence using (\ref{eq:kleinergamma}),
\begin{equation}
\sum_{t=t_0+1}^{t_0+n+N}X(t)X(t)^T=WW^T\geqslant\frac{\alpha\gamma}{\sigma_{\max}(PP^T)}(I_p\oplus 0)
\end{equation}
That is,
\begin{equation}
\rank\left(\sum_{t=t_0+1}^{t_0+n+N}X(t)X(t)^T\right)\geqslant p
\end{equation}
From Theorem 1 in \cite{Bai1985}, it follows that an input signal which is SR of order $p$ implies a persistent excitation of at most $p$ directions in a $n$-dimensional space. Thus,
\begin{equation}
\rank\left(\sum_{t=t_0+1}^{t_0+n+N}X(t)X(t)^T\right)=p
\end{equation}
\end{proof}

\emph{Remark:} Much of the existing results pertaining to persistent excitation pertain to the case when the external input $U(t)$ is SR of order $n$. Lemma \ref{lem:reachability} above as well as Corollary \ref{cor:inomega} stated below address the case when $U(t)$ is SR of order $p$, where $p\in (1,n)$, which to our knowledge has not been examined in the literature. As our goal is tracking of an arbitrary signal and not identification, we do not need the SR-order to be $n$, but arbitrary and fixed at some $p$. 

\begin{cor}\label{cor:inomega}
Consider the discrete time system 
\begin{equation}\label{eq:reachable2}
X(t+1)=AX(t)+BU(t)
\end{equation}
with $X(t)\in\R^n$, $A\in\R^{n\times n}$, $B\in\R^{n\times l}$, and $U(t)\in\R^l$. Assume that (\ref{eq:reachable2}) is completely reachable and that the input $U(t)$ is SR of a fixed order $1\leq p\leq n$. Then, there exists a subspace $\Omega_p\subset\R^n$ such that
\begin{equation}\label{eq:dimmx}
X(t)\in\Omega_p\;\forall t\geq t_0\quad\text{with }\dimm\Omega_p=p.
\end{equation}
That is, the columns of $X(t)$ span the subspace $\Omega_p$.
\end{cor}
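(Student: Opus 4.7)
The plan is to bootstrap from Lemma \ref{lem:reachability} by noting that its rank conclusion propagates as the window grows, and to identify $\Omega_p$ explicitly with the image of the sample covariance matrix over a window of length $n+N$.

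First, I would define $\Omega_p := \imm(M_0)$ where $M_0 = \sum_{t=t_0+1}^{t_0+n+N} X(t)X(t)^T$. Lemma \ref{lem:reachability} gives $\rank(M_0)=p$, so $\dimm\Omega_p = p$; since $M_0$ is a sum of rank-one outer products, its image equals $\operatorname{span}\{X(t_0+1),\ldots,X(t_0+n+N)\}$, so each of these vectors already lies in $\Omega_p$. Next, I would observe that if $U$ is SR of order $p$ in $N$ steps then it is also SR of order $p$ in $N'$ steps for every $N'\geq N$, because $\sum_{t=t_0+1}^{t_0+N'}\xi_p(t)\xi_p(t)^T$ dominates $\sum_{t=t_0+1}^{t_0+N}\xi_p(t)\xi_p(t)^T$ in the PSD order. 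Consequently Lemma \ref{lem:reachability} applies to every window of length $n+N+k$ with $k\geq 0$, yielding $\rank(M_k)=p$ for $M_k := \sum_{t=t_0+1}^{t_0+n+N+k}X(t)X(t)^T$.

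The main step is an induction on $k\geq 1$ showing $X(t_0+n+N+k)\in\Omega_p$ and $\imm(M_k)=\Omega_p$. Writing $M_k = M_{k-1} + v_k v_k^T$ with $v_k := X(t_0+n+N+k)$, I would invoke the elementary PSD identity $\imm(M+vv^T) = \imm(M) + \operatorname{span}\{v\}$, which follows by taking orthogonal complements in $\ker(M+vv^T)=\ker(M)\cap v^\perp$. This implies that $\rank(M+vv^T)=\rank(M)$ if and only if $v\in\imm(M)$. Combining $\rank(M_k)=\rank(M_{k-1})=p$ with the induction hypothesis $\imm(M_{k-1})=\Omega_p$ then forces $v_k\in\Omega_p$ and $\imm(M_k)=\Omega_p$, closing the induction.

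The only genuinely non-routine piece is the PSD image identity above; the remainder is bookkeeping combining the monotonicity of SR under window enlargement with repeated invocations of Lemma \ref{lem:reachability}. Together with the first paragraph, the induction yields $X(t)\in\Omega_p$ for every $t\geq t_0+1$ (the one-step shift is an index convention and can be absorbed by invoking the lemma from $t_0-1$ when $t_0\geq 1$), with $\dimm\Omega_p=p$, which is the claimed conclusion.
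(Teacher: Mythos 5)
Your proof is correct, and it is considerably more complete than the one in the paper. The paper disposes of this corollary in a single line --- it ``follows directly from Lemma \ref{lem:reachability} and the fact that $\rank(\Gamma)=\dimm\,\imm\,\Gamma$'' --- which by itself only shows that each window of states $\{X(t_0+1),\ldots,X(t_0+n+N)\}$ spans \emph{some} $p$-dimensional subspace; it does not explain why that subspace is the \emph{same} for every window, i.e., why a single fixed $\Omega_p$ contains $X(t)$ for all $t\geq t_0$, which is the actual content of (\ref{eq:dimmx}). Your sliding-window induction supplies exactly this missing piece: the observation that sufficient richness of order $p$ in $N$ steps persists for every longer window (by positive-semidefinite monotonicity of the sum) lets you apply Lemma \ref{lem:reachability} to each window length $n+N+k$, and the identity $\imm(M+vv^T)=\imm(M)+\operatorname{span}\{v\}$ for PSD $M$ (equivalently, $\rank(M+vv^T)=\rank(M)$ if and only if $v\in\imm(M)$) converts the constancy of the rank into membership of each new state $X(t_0+n+N+k)$ in the fixed subspace $\imm(M_0)$. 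The identification $\Omega_p=\imm\bigl(\sum_t X(t)X(t)^T\bigr)=\operatorname{span}\{X(t)\}$ is the one the paper evidently intends, so the route is the same in spirit, but your argument is the one that actually establishes the ``for all $t$'' quantifier; the only cosmetic point is the off-by-one between $t\geq t_0$ and $t\geq t_0+1$, which you already note and which is an artifact of the indexing in Definition \ref{def:pe}.
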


\begin{proof}
This follows directly from Lemma \ref{lem:reachability} and the fact that for any complex matrix $\Gamma$ the following is true
\begin{equation}
\rank(\Gamma)=\dimm\,\imm\,\Gamma
\end{equation}
where $\imm$ denotes the image of the linear transformation $\Gamma$.
\end{proof}

We make the following assumption which refers to an invariant property of persistent excitation.

\begin{ass}\label{ass:pe}
$\yref(k)$ is sufficiently rich of constant order $1\leq p\leq M$ for all $k$.
\end{ass}

Theorem \ref{thm:persistentexcitation} connects the sufficient richness of \yref with the tracking error and the parameter convergence in an adaptive system.

\begin{thm}\label{thm:persistentexcitation}
Let $D(k)\equiv 0$.  Suppose the adaptive controller (\ref{eq:adaptivecontroller})-(\ref{eq:updateepsilon}) is used to control the plant in (\ref{eq:thetaregressor}) and let Assumptions \ref{ass:fixeddelay} and \ref{ass:pe} hold.  Then
\begin{enumerate}
\item[(i)] $\lim_{k\to\infty}e(k)=\lim_{k\to\infty}y(k)-\yref(k)=0$, and
\item[(ii)] $\Phi_d(k)\in\Omega_p\subset\R^M$ as $k\to\infty$
\item[(iii)] $\tilde\theta_d(k)=\hat\theta_d(k)-\theta_d^*$ converges to $\bar\Omega_{M-p}$ where $\bar\Omega_{M-p}$ is defined as
\begin{equation}
\bar\Omega_{M-p}\defi\left\{x\;|\;\Phi_d(k)^Tx=0\text{ for }k\to\infty\right\}
\end{equation}
where $\Phi_d(k)$ is given in (\ref{eq:phiandtheta}).
\end{enumerate}
\end{thm}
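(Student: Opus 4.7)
The plan is to treat the three statements in order, leveraging Theorem~\ref{thm:fixeddelay}, Corollary~\ref{cor:inomega} and the standard Lyapunov machinery behind the Goodwin--Sin proof. Statement (i) is almost immediate: since $D(k)\equiv 0$ and Assumption~\ref{ass:fixeddelay} holds, the hypotheses of Theorem~\ref{thm:fixeddelay} are met, so $e(k)=y(k)-\yref(k)\to 0$ and $\{\hat\theta_d(k)\},\{y(k)\},\{u(k)\}$ are all bounded. I will quote Theorem~\ref{thm:fixeddelay} for (i) and also extract from its proof two facts that I need later: the normalized a~posteriori error $\varepsilon_d(k)/\sqrt{1+\Phi_d(k-d)^T\Phi_d(k-d)}\to 0$, and $\|\hat\theta_d(k)-\hat\theta_d(k-1)\|\to 0$. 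Combined with boundedness of $\Phi_d(k)$, the first gives $\varepsilon_d(k)\to 0$.

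For statement (ii), the idea is to replace the adaptively controlled closed loop by an ideal LTI closed loop driven by \yref, and then invoke Corollary~\ref{cor:inomega}. Concretely, define an ``ideal'' reference regressor $\Phi_d^{\mathrm{ref}}(k)$ built from $\yref(k),\ldots,\yref(k-m_1+1)$ and the corresponding ideal input $u^{\mathrm{ref}}(k)$ obtained from (\ref{eq:feedbackcontroller}) with the true $\vartheta_d^*,\beta_0^d$. This ideal system admits a linear state-space realization
\begin{equation}
X(t+1)=AX(t)+B\,\yref(t+d),\qquad \Phi_d^{\mathrm{ref}}(t)\subset X(t),
\end{equation}
which is completely reachable since $B(\qq)$ has all zeros strictly inside the unit disk (Assumption~\ref{ass:fixeddelay}), so the ideal controller cancels no unstable modes. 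Assumption~\ref{ass:pe} then gives that $\yref$ is SR of order $p$, and Corollary~\ref{cor:inomega} produces a $p$-dimensional subspace $\Omega_p\subset\R^M$ with $\Phi_d^{\mathrm{ref}}(k)\in\Omega_p$ for all $k$. Finally, since $e(k)\to 0$ and $\hat\theta_d(k)\to\theta_d^*+\tilde\theta_\infty$ in a bounded manner, the difference $\Phi_d(k)-\Phi_d^{\mathrm{ref}}(k)$ is $\ell^1$-summable (the lemma preceding Corollary~\ref{cor:inomega} is tailored exactly for this kind of asymptotic replacement), and hence $\Phi_d(k)\to\Omega_p$ as $k\to\infty$, establishing (ii).

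For statement (iii), start from the identity $\varepsilon_d(k)=-\tilde\theta_d(k-1)^T\Phi_d(k-d)$, which follows by subtracting (\ref{eq:updateepsilon}) from (\ref{eq:regressor}) with $D\equiv0$. The Lyapunov function $V(k)=\tilde\theta_d(k)^T\tilde\theta_d(k)$ is non-increasing along (\ref{eq:updatelaw}) (standard Goodwin--Sin calculation), so $V(k)\to V_\infty$ and any update moves $\tilde\theta_d$ only along the direction of $\Phi_d(k-d)$. Combined with (ii), all asymptotic updates lie in $\Omega_p$, so the component of $\tilde\theta_d(k)$ in $\bar\Omega_{M-p}$ is asymptotically frozen. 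For the component in $\Omega_p$, I would use the persistent excitation of $\Phi_d$ on $\Omega_p$ (guaranteed by Lemma~\ref{lem:reachability} applied to $X(t)$ spanning $\Omega_p$) together with $\varepsilon_d(k)\to 0$ and a Barb\u{a}lat-type argument: since $\sum_{t=k+1}^{k+N}\Phi_d(t-d)\Phi_d(t-d)^T\geqslant \alpha'(I_p\oplus 0)$ restricted to $\Omega_p$, the fact that $\tilde\theta_d(k-1)^T\Phi_d(k-d)\to0$ forces the $\Omega_p$-projection of $\tilde\theta_d(k)$ to zero. Hence $\tilde\theta_d(k)$ converges to $\bar\Omega_{M-p}$.

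The main obstacle will be the careful justification in step~(ii) that the adaptive closed-loop regressor $\Phi_d(k)$ and the ideal regressor $\Phi_d^{\mathrm{ref}}(k)$ differ by an asymptotically vanishing (indeed summable) sequence, so that the lemma preceding Corollary~\ref{cor:inomega} transfers the SR-of-order-$p$ property from $\yref$ to $\Phi_d$; the reachability of the ideal realization and the invariance of the subspace dimension under the adaptive perturbation both need explicit verification. The remaining steps are direct once (ii) is in place.
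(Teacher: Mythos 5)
Your overall route is the same as the paper's, which disposes of the theorem in three sentences: (i) is quoted from Theorem~\ref{thm:fixeddelay}; (ii) is obtained by observing that the closed loop becomes asymptotically linear and applying Corollary~\ref{cor:inomega} (i.e.\ (\ref{eq:dimmx})) to that limiting linear system under Assumption~\ref{ass:pe}; and (iii) is deduced from (i) together with the identity $e(k)=\Phi_d(k-d)^T\tilde\theta_d(k)$. Your (i) and (ii) are exactly this, only fleshed out. For (iii) you do more than the paper: you split $\tilde\theta_d$ into its $\Omega_p$ and $\bar\Omega_{M-p}$ components and use persistent excitation of $\Phi_d$ restricted to $\Omega_p$ to drive the $\Omega_p$-component to zero. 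The paper instead reads the conclusion straight off the definition of $\bar\Omega_{M-p}$: since $\Phi_d(k-d)^T\tilde\theta_d(k)=e(k)\to 0$ and the parameter estimates settle, $\tilde\theta_d(k)$ converges to the set $\{x:\Phi_d(k)^Tx\to 0\}$. Your version is the sharper statement (it actually pins down where in $\bar\Omega_{M-p}$ the limit lies), but it is not needed for the claim as stated.

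One step you should repair: you assert that $\Phi_d(k)-\Phi_d^{\mathrm{ref}}(k)$ is $\ell^1$-summable so that the Bai--Sastry lemma (Lemma~1) transfers sufficient richness. Theorem~\ref{thm:fixeddelay} and the underlying Goodwin--Sin analysis give only that the normalized a posteriori error is square-summable, hence (with bounded regressors) $e\in\ell^2$ and $e(k)\to 0$; $\ell^1$-summability does not follow and you give no argument for it. For item (ii) this does not matter --- convergence of $\Phi_d(k)-\Phi_d^{\mathrm{ref}}(k)$ to zero already implies that the distance from $\Phi_d(k)$ to $\Omega_p$ vanishes, which is all the statement requires --- but your PE-based argument in (iii) leans on the transferred richness of $\Phi_d$ itself, and there the gap is real. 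Either supply the summability (it does not come for free) or fall back on the paper's weaker but sufficient argument for (iii).
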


\begin{proof}
Item (i) follows directly from Theorem \ref{thm:fixeddelay} as it is independent of any persistent excitation of the reference signal \yref. Item (ii) follows by noting that the adaptive system in (\ref{eq:model}) and (\ref{eq:adaptivecontroller})-(\ref{eq:updateepsilon}) becomes asymptotically linear, and this linear system in turn has a state that satisfies (\ref{eq:dimmx}) due to Assumption \ref{ass:pe}. Item (iii) follows from (i) and the fact that $e(k)=\Phi_d(k-d)^T\tilde\theta_d(k)$.
\end{proof}

\section{The switching adaptive controller}
\label{sec:switchingadaptivecontroller}

\subsection{Hybrid Communication Protocols}
\label{sec:hybridcommunicationprotocols}
Hybrid communication protocols such as FlexRay \cite{flexray} provide time-triggered and event-triggered bus schedules. Time-triggered communication offers highly predictable temporal behavior, and event-triggered communication provides efficient bandwidth usage. To exploit their combined advantages, we propose the use of a hybrid communication protocol in this paper.

To illustrate our proposed scheme, we use FlexRay as it has been established as the de-facto standard for future automotive in-vehicle networks. The FlexRay protocol is organized in a sequence of communication cycles of fixed length. Further, every such cycle is subdivided into a \emph{static} segment (ST) and a \emph{dynamic} segment (DYN). The static segment is partitioned into time windows of fixed and equal length which are referred as \emph{slots}. Each processing unit is assigned one or more slots indexed by a slot number $S\in\mathcal{S}_{\text{ST}}$ that indicates available time windows for bus access in the static segment. Due to the predictable temporal behavior we use the static segment schedules for communication in the time-triggered mode and dynamic segment schedules in the event-triggered mode.

The dynamic segment is partitioned into \emph{minislots} of much smaller duration than the static slots. Similar to the static segment, the minislots are indexed by a slot number to indicate allowable message transmissions. However, dynamic slots are of varying size depending on the size of the message which is transmitted in a certain slot $S\in\mathcal{S}_{\text{DYN}}$, where $\mathcal{S}_{\text{DYN}}$ is the set of available slot numbers in the dynamic segment. If no message is ready for transmission in a particular slot only one small minislot is consumed and the slot number is incremented with the next minislot. However, if a message is transmitted in a slot $S\in\mathcal{S}_{\text{DYN}}$ then the slot number increments with the next minislot after which the message transmission has been completed. Hence, bus resources are only utilized if messages are actually transmitted on the bus; otherwise only one minislot is consumed. Dynamic segment schedules are used for communication in the event-triggered~mode.

The focus of this problem is the simultaneous control of several applications for stabilization. That is, the goal is to choose $u$, the input of the $i$th control application such that $y(k)$, its output, converges to $\yref(k)$ which is zero. In the context of the problem under consideration, all control applications are partitioned into a sensor task $T_s$, a controller task $T_c$, and an actuator task $T_a$ (Figure \ref{fig:bus}). We consider a communication protocol where each communication cycle is divided into time-triggered and event-triggered segments. Using \emph{time-triggered} communication schedules, denoted as \Mtt, applications are allowed to send messages only at their assigned slots and the tasks are triggered synchronously with the bus, i.e., we assume that the communication delay due to the finite speed of the bus is negligible and hence the delay $d$ in (\ref{eq:predictorform}) is equal to $1$. On the other hand, in an \emph{event-triggered} schedule, denoted as \Met, the tasks are assigned priorities in order to arbitrate for access to the bus. Note that in our setup, multiple control applications share the same bus and hence multiple control messages have to be sent using a common bus and thus the messages might experience a communication delay $\tau$ when the higher priority tasks access the event-triggered segment. We choose the event-triggered communication schedules such that the sensor-to-actuator delay $\tau$ is within $(d_2-1)$ sample intervals, i.e., $0<\tau\leq (d_2-1)h$ for the control-related messages and hence the delay $d$ is at most equal to $d_2$ with $d_2\geqslant 2$. In summary, the delay $d=1$ if $\Mbus=\Mtt$ and $d=d_2$ if $\Mbus=\Met$ where \Mbus denotes the protocol used at time $k$.

The properties of the varying delay of the TT and ET protocol are directly exploited in the control design in the following way. Whenever the error between the plant output and its desired value is above some threshold \eth, we send the control messages over the TT protocol, as this guarantees an aggressive control action with minimal communication delay. Otherwise, the control messages are sent using the ET protocol. That is,
\begin{equation}
\Mbus=\begin{cases}
\Met&\text{if }|y(k)-\yref|\leqslant\eth\\
\Mtt&\text{if }|y(k)-\yref|>\eth.
\end{cases}
\label{eq:protocol}
\end{equation}
That is, the protocol switches depending on the state of the control application, as in (\ref{eq:protocol}).

\begin{figure}[tbp]
  \centering
  \psfrag{P1}[cc][cc]{$PU_1$}
  \psfrag{P2}[cc][cc]{$PU_2$}
  \psfrag{Ts}[cc][cc]{$T_s$}
  \psfrag{Tc}[cc][cc]{$T_c$}
  \psfrag{Ta}[cc][cc]{$T_a$}
  \psfrag{y}[cc][][0.8]{$y(k)$}
  \psfrag{sensor}[tc][cc][0.7]{sensor}
  \psfrag{controller}[cc][cc][0.7]{controller}
  \psfrag{actuator}[cc][cc][0.7]{actuator}
  \psfrag{shared}[cc][cc]{shared communication network}
  \includegraphics[width=0.8\linewidth]{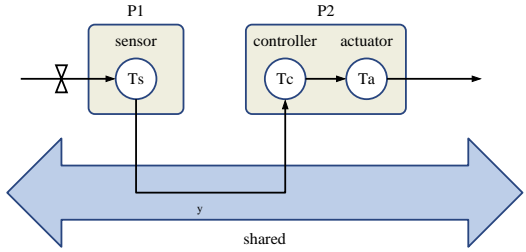}
  \caption{Schematic of a cyber-physical control system}
  \label{fig:bus}
\end{figure}

\subsection{Controller design}
\label{sec:switchingcontrollerdesign}
Commensurate with the switching protocol in (\ref{eq:protocol}), we propose a switch in the adaptive controller as well, and is defined below:
\begin{gather}
  \label{eq:TTcontroller}
  \left.\begin{array}{l}
    \displaystyle u(k)=\frac{1}{\hat\theta_{1,\nu}(k)}\left(\yref(k+1)-\hat\vartheta_1(k)^T\phi_1(k)\right)\\
    \varepsilon_1(k)=y(k)-\hat\theta_1(k-1)^T\Phi_1(k-1)\\
    \displaystyle\hat\theta_1(k)=\hat\theta_1(k-1)+\frac{a(k)\Phi_1(k-1)\varepsilon_1(k)}{1+\Phi_1(k-1)^T\Phi_1(k-1)}\\
        a(k)=\begin{cases}
1&\begin{minipage}{5cm}
if $\nu$-th element of right-hand side of update law evaluated using $a(k)=1$ is $\neq 0$
\end{minipage}\\
\gamma_1&\text{otherwise, where }0<\gamma_1<2,\gamma_1\neq 1
\end{cases}
  \end{array}\right\}\hspace*{-0.3cm}\begin{array}{l}\text{if}\\\Mtt \end{array}
\end{gather}
where $\phi_1(k)$ is given in Eq. (\ref{eq:phiandthetaklein}), $\Phi_1(k)$ is given in Eq. (\ref{eq:phiandtheta}), $\hat\theta_1(k)=\left[\hat\vartheta_1(k)^T\;\;\hat\theta_{1,\nu}(k)\right]^T$ is the estimation of the controller gains $\theta_1^*$ (Eq. \ref{eq:phiandtheta}), and $\gamma_1\in (0,2)$.

If $\mathcal{M}_{\text{Bus}}(k)=\Met$, the adaptive controller is given by
\begin{equation}
  \label{eq:ETcontroller}
  \left.\begin{array}{l}
    \displaystyle u(k)=\frac{1}{\hat\theta_{2,\nu}(k)}\left(\yref(k+d_2)-\hat\vartheta_2(k)^T\phi_2(k)\right)\\
    \varepsilon_2(k)=y(k)-\hat\theta_2(k-1)^T\Phi_2(k-2)\\
    \displaystyle\hat\theta_2(k)=\hat\theta_2(k-1)+\frac{a(k)\Phi_2(k-d_2)\varepsilon_2(k)}{1+\Phi_2(k-d_2)^T\Phi_2(k-d_2)}\\
    a(k)=\begin{cases}
1&\begin{minipage}{5cm}
if $\nu$-th element of right-hand side of update law evaluated using $a(k)=1$ is $\neq 0$
\end{minipage}\\
\gamma_2&\text{otherwise, where }0<\gamma_2<2,\gamma_2\neq 1
\end{cases}
  \end{array}\right\}\hspace*{-0.3cm}\begin{array}{l}\text{ if }\\\Met\end{array}
\end{equation}
where $\phi_2(k)$ is given in Eq. (\ref{eq:phiandthetaklein}), $\Phi_2(k)$ is given in Eq. (\ref{eq:phiandtheta}), $\hat\theta_2(k)=\left[\hat\vartheta_2(k)^T\;\;\hat\theta_{2,\nu}(k)\right]^T$ is the estimation of the controller gains $\theta_2^*$ (Eq. \ref{eq:phiandtheta}), and $\gamma_2\in (0,2)$.

\subsection{Main Result}
The following definitions are useful for the rest of the paper. We denote the instants of time when the switch from TT to ET occurs with $k_p$, $p=1,3,5,\ldots$, and the instants of time when the switch from ET to TT occurs with $k_p$, $p=2,4,6,\ldots$. That is, the TT protocol is applied for $k\in[k_{2p}';k_{2p+1}],p\in\NN_0$ and the ET protocol is applied for $k\in[k_{2p+1}';k_{2p}],p\in\NN_0$ with $k_p'\defi k_p+1$ and switches occurring between $[k_p;k_p'],p\in\NN$ (see Figure \ref{fig:error}).

\begin{ass}\label{ass:Dimpulse}
The disturbance $D(k)$ in (\ref{eq:predictorform}) is an impulse train, with the distance between any two consecutive impulses greater than a constant \Tdw.
\end{ass}

This is the main result of the paper:
\begin{thm}
\label{thm:betaknown}
Let the plant and disturbance $D$ in (\ref{eq:predictorform}) satisfy Assumptions \ref{ass:fixeddelay}, \ref{ass:pe}, and \ref{ass:Dimpulse}. Consider the switching adaptive controller in (\ref{eq:TTcontroller}) and (\ref{eq:ETcontroller}) with the hybrid protocol in (\ref{eq:protocol}) and the following parameter estimate selections at the switching instants
\begin{align}
\label{eq:thetachoice1}\hat\theta_1(k_p)   & =0,                     && p=0,2,4,\ldots\\
\label{eq:thetachoice0}\hat\theta_2(k_1)   & =0\\
\label{eq:thetachoice2}\hat\theta_2(k_p+l) & =\hat\theta_2(k_{p-1}), && p=3,5,7,\ldots,\\
                                           &                         && l=0,1,\ldots, m_2+d-1
\end{align}
Then there exists a positive constant $\Tdws$ such that for all $\Tdw\geq\Tdws$, the closed loop system has globally bounded solutions.
\end{thm}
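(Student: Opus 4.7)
The overall strategy is an induction over the sequence of switching epochs. Inside any single protocol phase the delay $d$ is held fixed ($d=1$ during TT, $d=d_2$ during ET) and the plant/adaptive-controller pair is precisely the fixed-delay system of (\ref{eq:predictorform}) with (\ref{eq:adaptivecontroller})--(\ref{eq:updateepsilon}). Provided the phase starts from bounded signals and bounded estimate, Theorem \ref{thm:fixeddelay} applies disturbance-free and gives both boundedness of $y$, $u$, $\hat\theta_d$ and $e(k)\to 0$. Hence $|e(k)|$ falls below $\eth$ in finite time inside every phase, so each phase has finite length and ends with bounded signals. The theorem then reduces to two tasks: (a) show that the resets (\ref{eq:thetachoice1})--(\ref{eq:thetachoice2}) keep parameter estimates inside a uniformly bounded set across switches, and (b) obtain a uniform a priori bound on the time required to reach $|e|\leq\eth$, so that the dwell $\Tdw$ can be chosen to accommodate one complete TT$\to$ET cycle between successive impulses.

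For (a), the rule $\hat\theta_1(k_p)=0$ is trivially bounded, and the rule $\hat\theta_2(k_p+l)=\hat\theta_2(k_{p-1})$ for odd $p\geq 3$ merely carries the terminal value of the previous ET phase across the intervening TT phase (during which $\hat\theta_2$ is dormant). To argue that the carried value is usable I would invoke Assumption \ref{ass:pe} and Theorem \ref{thm:persistentexcitation}: in every ET phase, $\Phi_2(k)$ eventually lies in the $p$-dimensional subspace $\Omega_p$ given by Corollary \ref{cor:inomega}, and $\tilde\theta_2(k)$ converges to the orthogonal affine set $\theta_2^*+\bar\Omega_{M-p}$. Combined with the non-expansiveness of the Lyapunov function $V(\tilde\theta)=\tilde\theta^T\tilde\theta$ along (\ref{eq:updatelaw}), this pins $\hat\theta_2(k_{p-1})$ in a fixed bounded ball, so that reinitializing from this value gives the next ET phase bounded initial data compatible with Theorem \ref{thm:fixeddelay}.

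For (b) an explicit convergence rate must be extracted from (\ref{eq:updatelaw}) because Theorem \ref{thm:fixeddelay} is only asymptotic. The plan is to couple the standard Lyapunov decrement $V(\tilde\theta(k))-V(\tilde\theta(k-1))\leq -c\,\varepsilon_d(k)^2/(1+\Phi_d(k-d)^T\Phi_d(k-d))$ with the PE lower bound on $\Phi_d$ coming from Definition \ref{def:pe} and Lemma \ref{lem:reachability} (applied to the linearized post-reset error dynamics). Summing over a PE window of length $N$ yields a geometric contraction of $\|\tilde\theta\|$, hence a uniform bound $N_{\mathrm{TT}}(R)$, $N_{\mathrm{ET}}(R)$ on the number of steps to drive $|e|$ below $\eth$ as a function of a switch-time state radius $R$. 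I would then set $\Tdws\defi N_{\mathrm{TT}}(R)+N_{\mathrm{ET}}(R)$ and appeal to Assumption \ref{ass:Dimpulse}: between two consecutive impulses at least one full TT$\to$ET cycle completes, an impulse produces only a bounded jump in $e$ (since $\Phi_d$ and $\hat\theta_d$ are bounded just before it), and so the induction closes with a uniform switch-time bound and global boundedness of all signals.

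The main obstacle is step (b): converting Theorem \ref{thm:fixeddelay}'s asymptotic tracking into a \emph{uniform-in-initial-condition} finite-time bound on reaching $|e|\leq\eth$ is nontrivial, because the PE of $\yref$ must be propagated through the closed loop into a PE of the regressor $\Phi_d$ of the correct order, and this must be done simultaneously for two different values of $d$ with the reset in (\ref{eq:thetachoice2}) linking them. Handling the change of delay at each switch --- where the regressor changes dimension and the persistent-excitation subspace $\Omega_p$ is redefined --- is the technical crux, and is precisely where the invariant-order Assumption \ref{ass:pe} and the new Corollary \ref{cor:inomega} are leveraged.
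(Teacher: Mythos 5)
Your overall architecture --- per-phase application of Theorem \ref{thm:fixeddelay}, a common Lyapunov function $V(\tilde\theta)=\tilde\theta^T\tilde\theta$ for parameter boundedness across switches, and a dwell-time condition tied to Assumption \ref{ass:Dimpulse} --- matches the paper's Stages 1, 3, and 4. But there is a genuine gap: you never address the question that the paper's entire Stage 2 (and hence its main contribution) is devoted to, namely why the ET phase is \emph{non-degenerate}. The protocol (\ref{eq:protocol}) switches back to TT the instant $|e|>\eth$, so after a TT$\to$ET switch at $k_p$ --- where the delay jumps from $1$ to $d_2$, the controller changes, and $\hat\theta_2$ is reinitialized to its value $\hat\theta_2(k_{p-1})$ from the \emph{previous} ET phase --- one must prove that $|e_2(k_p')|\leq\eth$ and that this persists for at least $m_2+d_2-1$ further steps; otherwise the switching sequence chatters and the whole induction over epochs collapses. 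Boundedness of the carried estimate, which is all you extract from Theorem \ref{thm:persistentexcitation} and the non-expansiveness of $V$, is not sufficient: a bounded but arbitrarily oriented $\tilde\theta_2$ paired with the new regressor could produce $|\tilde\vartheta_2^T\phi_2|$ far above $\eth$. The paper's argument is that (i) since $|y-\yref|\to 0$ during TT, the actual regressor converges to the reference-model regressor $\phi_i^*$ generated by $\yrefp$ through the stable inverse $W^{-1}$ (Assumption \ref{ass:fixeddelay}.2 is essential here); (ii) by Assumption \ref{ass:pe} and Corollary \ref{cor:inomega} the reference-model regressor lives in the \emph{same} subspace $\Omega_p$ at every switching instant; and (iii) by Theorem \ref{thm:persistentexcitation}(iii) the carried parameter error $\tilde\theta_2(k_{p-1})$ has converged to $\bar\Omega_{M-p}$, the annihilator of that subspace, so the inner product $\tilde\vartheta_2(k_{p-1})^T\phi_2(k_p'-d_2)$ --- which \emph{is} the post-switch tracking error --- is small. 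You list these ingredients but deploy them only for boundedness, which misses the point of the orthogonality.

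A secondary concern is your step (b): you propose to convert Theorem \ref{thm:fixeddelay} into a uniform geometric contraction of $\|\tilde\theta\|$ by combining the Lyapunov decrement with persistent excitation of $\Phi_d$. The paper does not do this and does not need it; under Assumption \ref{ass:pe} the regressor is PE only in a $p$-dimensional subspace with $p$ possibly much less than $M$, so $\|\tilde\theta\|$ does not contract at all in the directions of $\bar\Omega_{M-p}$, and no uniform rate for the full parameter error exists. The paper instead needs only a finite (not uniformly quantified) time $\Delta$ for the error to fall below $\eth$ in each TT phase (Step 1-1), with uniformity across epochs supplied by the resets (\ref{eq:thetachoice1})--(\ref{eq:thetachoice2}) and the bounded jump $\Delta V$ at switches (Step 3-1). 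Your plan as stated would therefore stall precisely where you identify the ``technical crux,'' whereas the paper sidesteps it by working with the subspace geometry rather than with convergence rates.
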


A qualitative proof of Theorem \ref{thm:betaknown} is as follows:\\ 
First, Theorem \ref{thm:fixeddelay} shows that if either of the individual control strategies (\ref{eq:TTcontroller}) or (\ref{eq:ETcontroller}) is deployed, then boundedness is guaranteed. That is, for a sufficiently large dwell time \Tdw over which the controller stays in the TT protocol, with the controller in (\ref{eq:TTcontroller}), boundedness can be shown. After a finite number of switches, when the system switches to an ET protocol, it is shown that the regressor vector remains in the same subspace as in the earlier switch to ET and hence, the corresponding tracking error remains small even after the switch to ET. Hence the stay in ET is ensured for a finite time, guaranteeing boundedness with the overall switching controller. 

\textit{Proof of Theorem \ref{thm:betaknown}:} We define an equivalent reference signal \yrefp that combines the effect of both \yref and the disturbance $D$ as
\begin{equation}
\yrefp(k)\defi\yref(k)+D'(k)
\label{eq:yrefp}
\end{equation}
where $D'(k)$ is given by
\begin{equation}
D'(k)\defi G^{-1}(\qq)D(k).
\label{eq:dprime}
\end{equation}
and $G(\qq)=\frac{B(\qq)}{A(\qq)}$ is the transfer function of the plant (\ref{eq:model}).
Also, we define a reference model signal $\phi_i^*$ given by 
\begin{equation}\label{eq:phistern}
\phi_i^*(\qq)=G_i(1+\vartheta_i^{*T}G_i)^{-1}\yrefp
\end{equation}
where the transfer functions $G_i,i=1,2$ is given by
\begin{equation}
G_i(\qq)=\begin{bmatrix}
W(\qq)\qq\\\vdots\\W(\qq)\qn{m_1+1}\\\qq\\\vdots\\\qn{m_2-d_i+1}
\end{bmatrix}
\label{eq:gijoe}
\end{equation}
and the optimal feedback gain $\vartheta_i^*,i=1,2$ is given by (\ref{eq:phiandthetaklein}). The overall ideal closed-loop system is given by the block diagram shown in Figure \ref{fig:blockdiagram01}. We note that when there is no disturbance, the output $\phi_i^*$ corresponds to the desired regressor vector, and its first element of the vector corresponds to \yref.

\begin{figure}[htbp]
\begin{center}
\includegraphics[width=0.8\linewidth]{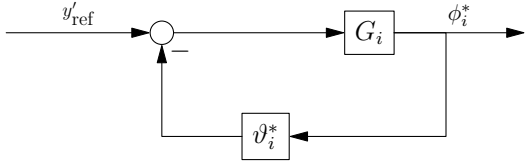}
\end{center}
\caption{Block diagram of the reference model for $i=1,2$}
\label{fig:blockdiagram01}
\end{figure}

When the algorithm is in mode \Mtt, the underlying error equation is given by
\begin{equation}
\label{eq:errorTT1}
    e_1(k+1)=\left(\vartheta_1^*-\hat\vartheta_1(k)\right)^T\phi_1(k)=\tilde\vartheta_1(k)^T\phi_1(k).
\end{equation}
with $\tilde\vartheta_1(k)=\vartheta_1^*-\hat\vartheta_1(k)$. When the system is in mode \Met, the error equation is given by
\begin{equation}
\label{eq:errorTT2}
    e_2(k+2)=\left(\vartheta_2^*-\hat\vartheta_2(k)\right)^T\phi_2(k)=\tilde\vartheta_2(k)^T\phi_2(k).
\end{equation}
with $\tilde\theta_2(k)=\theta_2^*-\bar\theta_2(k)$.

Define $\theta^*_a$ as
\begin{equation}
\theta_a^*=\begin{cases}\begin{bmatrix}\theta_1^{*T}&0\end{bmatrix}^T&\text{if }\Mbus=\Mtt\\\theta_2^*&\text{if }\Mbus=\Met\end{cases}
\end{equation}
Choose Lyapunov function $V(k)=\tilde\theta_a(k)^T\tilde\theta_a(k)$ where $\tilde\theta_a(k)=\theta_a^*-\hat\theta_a(k)$. Let $\Delta V(k)=V(k)-V(k-1)$.

The proof consists of the following four stages:

\newcounter{Cnt_stages}
\newcounter{Cnt_steps}
\begin{list}{\textbf{\textsc{Stage \arabic{Cnt_stages}:}}}{\usecounter{Cnt_stages}%
\setlength{\leftmargin}{0pt}%
\setlength{\itemindent}{0pt}%
\setlength{\labelsep}{10pt}%
\setlength{\labelwidth}{-10pt} } 
\item Let there exist a sequence of finite switching times $\{k_l\}_{l\in\NN}$ with the properties described above. Then the errors $e_1(k)$ and $e_2(k)$ are bounded for all $k$.\\The proof of Stage 1 is established using the following three steps:
	\begin{list}{\textbf{Step \arabic{Cnt_stages}-\arabic{Cnt_steps}}}{\usecounter{Cnt_steps}\setlength{\leftmargin}{20pt}}%
	\item There exists a $\Delta\in\mathbb{N}$ such that $\forall\varepsilon\in\;]0;\eth]:\;|e_1(k_1)|<\varepsilon\leqslant\eth$ where $k_1=k_0+\Delta+m_1$.
	\item During \Mtt (\Met), the error $e_1(k)$ ($e_2(k)$) is bounded.
	\item There exists a constant $M_1<\infty$ with $|e_1(k_p')|\leqslant M_1$, for $p=2,4,6,\ldots$.
	\end{list}
\item The length of the interval $[k_p';k_{p+1}]$ is greater than 2, i.e., $k_{p+1}-k_p'>2,p=1,3,5,\ldots$\\
Stage 2 is established using the following steps:
	\begin{list}{\textbf{Step \arabic{Cnt_stages}-\arabic{Cnt_steps}}}{\usecounter{Cnt_steps}\setlength{\leftmargin}{20pt}}
	\item If $\lim_{k\to\infty}|y(k)-y_{\text{{\rm ref}}}(k)|=0$ then $\lim_{k\to\infty}\|\phi_i(k)-\phi^*_i(k)\|=0$
	\item If $|y(k)-\yref(k)|\to 0$, then $\phi_i(k_j)\in\Omega_i$
	\item $|e_2(k_p')|=|\tilde\vartheta_2(k_{p-1})^T\phi_2(k_p'-d_2)+\tilde\theta_{2,\nu}(k_{p-1})\yref(k_p')|\leqslant\eth$ for $p=3,5,7,\ldots$
	\item $|e_2(k_p'+1)|\leqslant\eth$ for $p=3,5,7,\ldots$
	\end{list}
\item $V(k)$ is bounded for all $k\in\NN_0$.\\
The following steps will be used to establish Stage 3:
	\begin{list}{\textbf{Step \arabic{Cnt_stages}-\arabic{Cnt_steps}}}{\usecounter{Cnt_steps}\setlength{\leftmargin}{20pt}}
	\item $\Delta V(k_p')\leq M_2<\infty$ and $\Delta V(k_{p+1}')\leq M_3<\infty$, for all $p\in\NN_0$.
	\item $\Delta V(k)\leqslant 0$ during \Mtt\ and during \Met
	\end{list}
\item The control input is bounded for all $k$ and hence all signals are bounded.\\
The following two steps will be used to prove Stage 4:
	\begin{list}{\textbf{Step \arabic{Cnt_stages}-\arabic{Cnt_steps}}}{\usecounter{Cnt_steps}\setlength{\leftmargin}{20pt}}
	\item $|u(k)|\leqslant M_4\quad\forall k$
	\item All signals are bounded.
	\end{list}
\end{list}

We note that the proofs of Stages 1, 3, and 4 are identical to that in \cite{Voit2012} and are therefore omitted here. Since Stage~2 differs significantly from its counterpart in \cite{Voit2012} due to $\yref\neq 0$, we provide its proof in detail below.

\begin{figure}[tbp]
\psfrag{eth}{\eth}
\psfrag{e}{$\varepsilon$}
\psfrag{e1}{$e_1$}
\psfrag{e2}{$e_2$}
\psfrag{k0}{$k_0$}
\psfrag{k1}{$k_0'$}
\psfrag{k2}{$k_1$}
\psfrag{k3}{$k_1'$}
\psfrag{k4}{$k_2$}
\psfrag{k5}{$k_2'$}
\psfrag{k6}{$k_3$}
\psfrag{k7}{$k_3'$}
\psfrag{ET}{ET}
\psfrag{TT}{TT}
\psfrag{Sw}{switch (S)}
\psfrag{S}{S}
\psfrag{error}{error}
\psfrag{time}[][][0.8]{time}
\includegraphics[width=1.0\linewidth]{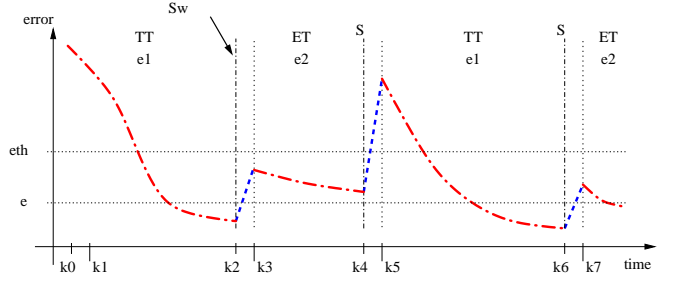}
\caption{Schematic evolution of the error with a given sequence of switching times. Impulses in $D(k)$ are assumed to occur at $k_{p}, p=0,2,4,...$.}
\label{fig:error}
\end{figure}

\stepcounter{stepstages}\setcounter{stepproof}{0}

\begin{proofstep}{$\lim_{k\to\infty}|y(k)-\yref(k)|=0\Rightarrow\lim_{k\to\infty}\|\phi_i(k)-\phi^*_i(k)\|=0$}
In this step we show that if the tracking error $|y(k)-\yref(k)|$ is small the state signal error $\|\phi_i(k)-\phi^*_i(k)\|$ is also small.

The signal $\phi_i(k)-\phi^*_i(k)$ is the output produced by the following transfer function $H$ with $|y(k)-\yref(k)|$ as the input:
\begin{equation}
H=\begin{bmatrix}
1\\\qq\\\vdots\\\qn{m_1+1}\\W^{-1}(\qq)\qq\\W^{-1}(\qq)\qn{2}\\\vdots\\W^{-1}(\qq)\qn{m_2-d_i+1}
\end{bmatrix}
\label{eq:H}
\end{equation}
where $W^{-1}(\qq)$ is the inverse of the plant transfer function $W(\qq)=\frac{B(\qq)}{A(\qq)}$ with the input signal $y(k)-\yref(k)$ and $\phi_i^*(k)$ given in (\ref{eq:phistern}). From Assumption \ref{ass:fixeddelay}, it follows that $W^{-1}(\qq)$ is a stable transfer function. Hence, as $|y(k)-\yref(k)|$ tends to zero, $\phi_i(k)-\phi^*_i(k)$ also tends to zero.\end{proofstep}

\begin{proofstep}{If $|y(k)-\yref(k)|\to 0$, then $\phi_i(k_j)\in\Omega_i$}
We first show that $\phi_d^*(k)\in\Omega_i$ for $i=1,2$ and $j=1,2,3,\ldots$. We note that the reference model given in (\ref{eq:phistern}) is a linear system and hence there exists a state space representation
\begin{equation}
\phi_i^*(k+1)=R\phi_i^*(k)+S\yrefp
\end{equation}
with $(R,S)$ being completely reachable. Then it follows directly from Lemma \ref{cor:inomega} that $phi_i^*(k_j)\in\Omega_i$ for $i=1,2$ and $j=1,2,3,\ldots$. Together with Step 2-1 it follows that if $|y(k)-\yref(k)|\to 0$, then $\phi_i(k_j)\in\Omega_i$.
\end{proofstep}

\begin{proofstep}{$|e_2(k_p')|=|\tilde\vartheta_2(k_{p-1})^T\phi_2(k_p'-d_2)+\tilde\theta_{2,\nu}(k_{p-1})\yref(k_p')|\leqslant\eth$ for $p=3,5,7,\ldots$}
First, we show that the error of the signal generated by the reference model signal $\phi^*_2(k_p'-d_2)$ together with the last parameter estimation value $\vartheta_2(k_{p-1})$ at the end of the previous ET phase is small and therefore the output error $e_2(k_p'),p=3,5,7,\ldots$ is below the threshold \eth.

From Step 2-2 we know that $\phi^*_2(k_p'-d_2)$ is in the same subspace $\Omega_2$ as $\phi^*_2(k_{p-1})$. From Step 2-1 we know that $\phi^*_2(k_{p-1})$ is close to $\phi_2(k_{p-1})$ which in turn generates together with $\tilde\vartheta_2(k_{p-1})$ and $\tilde\theta_{2,\nu}(k_{p-1})$ an error which is $\leqslant\varepsilon$ according to Theorem \ref{thm:fixeddelay}. Hence,
\[|\tilde\vartheta_2(k_{p-1})^T\phi^*_2(k_p'-d_2)|+\tilde\theta_{2,\nu}(k_{p-1})\yref(k_p')\leqslant\varepsilon.\]

From Step 2-1 we know that $\phi_2(k_p'-d_2)$ is close to $\phi^*_2(k_p'-d_2)$. Hence, according to Step 2-4 we have $|e_2(k_p')|\leqslant\eth,p=3,5,7,\ldots$.
\end{proofstep}

\begin{proofstep}{$|e_2(k_p'+l)|\leqslant\eth, p=3,5,7,\ldots$ and $l=1,2,\ldots,m_2+d_2-1$}
This step shows that the error at the beginning of the ET mode is below the threshold for at least $m_2+d_2$ steps.

From Step 2-3 we know that $|e_2(k_p')|\leqslant\eth$. According to the parameter choice in (\ref{eq:thetachoice2}), the controller uses a constant initial value for the first $m_2+d_2-1$ steps. Thus, the error $|e_2(k_p'+l)|=|\tilde\vartheta_2(k_{p-1})^T\phi_2(k_p'+l-d_2)+\tilde\theta_{2,\nu}(k_{p-1})\yref(k_p'+l)|\leqslant\eth$ because Steps 2-1 to 2-5 can be applied.
\end{proofstep}

\subsection{Comments on the Main Result}
\label{sec:comments}
Theorem \ref{thm:betaknown} implies that the plant in (\ref{eq:predictorform}) can be guaranteed to have bounded solutions with the proposed adaptive switching controller in (\ref{eq:TTcontroller}) and (\ref{eq:ETcontroller}) and the hybrid protocol in (\ref{eq:protocol}), in the presence of disturbances. The latter is assumed to consist of impulse-trains, with their inter-arrival lower bounded. We note that if no disturbances occur, then the choice of the algorithm in (\ref{eq:protocol}) implies that these switches cease to exist, and the event-triggered protocol continues to be applied. And switching continues to occur with the onset of disturbances, with Theorem \ref{thm:betaknown} guaranteeing that all signals remain bounded with the tracking errors $e$ converging to $\eth$ before the next disturbance occurs.

The nature of the proof is similar to that of all switching systems, in some respects. A common Lyapunov function $V(k)$ was used to show the boundedness of parameter estimates, which are a part of the state of the overall system (in Stage 3). The additional states were shown to be bounded using the boundedness of the tracking errors $e_1$ and $e_2$ (in Stage 1) and the control input using the method of induction (in Stage 4). Since the switching instants themselves were functions of the states of the closed-loop system, we needed to show that indeed these switching sequences exist, which was demonstrated in Stage 2. To this end, the sufficient richness properties of the reference signal were utilized to show that the signal vectors of a reference model and the system converge to the same subspace. Next, it was shown that the error generated by the reference model is small and thus concluded that the tracking error at the switch from TT to ET stays below the threshold \eth. It is the latter that distinguishes the adaptive controller proposed in this paper, as well as the methodology used for the proof, from existing adaptive switching controllers and their proofs in the literature.

\section{Summary}
\label{sec:conclusion}
In this work we considered the control of multiple control applications using a hybrid communication protocol for sending control-related messages. These protocols switch between time-triggered and event-triggered methods, with the switches dependent on the closed-loop performance, leading to a co-design of the controller and the communication architecture. In particular, this co-design consisted of switching between a TT and ET protocol depending on the amplitude of the tracking error, and correspondingly between two different adaptive controllers that are predicated on the resident delay associated with each of these protocols. These delays were assumed to be fixed and equal to $1$ for the TT protocol and greater than $2$ for the ET protocol. It was shown that for any reference input whose order of sufficient richness stays constant, the signal vector and the parameter error vector converge to subspaces which are orthogonal to each other. The overall adaptive switching system was shown to track such reference signals, with all solutions remaining globally bounded, in the presence of an impulse-train of disturbances with the inter-arrival time between any two impulses greater than a finite constant.

\footnotesize{
\bibliographystyle{IEEEtran}
\bibliography{codesign}

\begin{thebibliography}{10}
\providecommand{\url}[1]{#1}
\csname url@rmstyle\endcsname
\providecommand{\newblock}{\relax}
\providecommand{\bibinfo}[2]{#2}
\providecommand\BIBentrySTDinterwordspacing{\spaceskip=0pt\relax}
\providecommand\BIBentryALTinterwordstretchfactor{4}
\providecommand\BIBentryALTinterwordspacing{\spaceskip=\fontdimen2\font plus
\BIBentryALTinterwordstretchfactor\fontdimen3\font minus
  \fontdimen4\font\relax}
\providecommand\BIBforeignlanguage[2]{{%
\expandafter\ifx\csname l@#1\endcsname\relax
\typeout{** WARNING: IEEEtran.bst: No hyphenation pattern has been}%
\typeout{** loaded for the language `#1'. Using the pattern for}%
\typeout{** the default language instead.}%
\else
\language=\csname l@#1\endcsname
\fi
#2}}

\bibitem{Liu2000}
J.~W.~S. Liu, \emph{Real-Time Systems}.\hskip 1em plus 0.5em minus 0.4em\relax
  Prentice Hall, 2000.

\bibitem{Nghiem2006}
T.~Nghiem, G.~J. Pappas, R.~Alur, and A.~Girard, ``Time-triggered
  implementations of dynamic controllers,'' in \emph{Proc. of the 6th ACM \&
  IEEE International conference on Embedded software}, 2006.

\bibitem{Cervin2006}
A.~Cervin and P.~Alriksson, ``Optimal on-line scheduling of multiple control
  tasks: {A} case study,'' in \emph{Proc. of the 18th Euromicro Conference on
  Real-Time Systems}, 2006.

\bibitem{Wang2011}
X.~Wang and M.~Lemmon, ``Event-triggering in distributed networked control
  systems,'' \emph{IEEE Transactions on Automatic Control}, vol.~56, no.~3, pp.
  586--601, 2011.

\bibitem{Tabuada2007}
P.~Tabuada, ``Event-triggered real-time scheduling of stabilizing control
  tasks,'' \emph{IEEE Transactions on Automatic Control}, vol.~52, no.~9, pp.
  1680--1685, 2007.

\bibitem{Masrur2011}
A.~Masrur, D.~Goswami, R.~Schneider, H.~Voit, A.~Annaswamy, and S.~Chakraborty,
  ``Schedulability analysis of distributed cyber-physical applications on mixed
  time-/event-triggered architectures with retransmissions,'' in \emph{Proc. of
  the IEEE Symposium on Industrial Embedded Systems (SIES)}, 2011.

\bibitem{Albert2004}
A.~Albert, ``Comparison of event-triggered and time-triggered concepts with
  regard to distributed control systems,'' in \emph{Proc. of the Embedded
  World}, 2004.

\bibitem{Talbot2009}
S.~C. Talbot and S.~Ren, ``{Comparision of FieldBus Systems CAN, TTCAN, FlexRay
  and LIN in Passenger Vehicles},'' in \emph{Proc. of the IEEE International
  Conference on Distributed Computing Systems Workshops (ICDCS)}, 2009.

\bibitem{Palopoli2005}
L.~Palopoli, C.~Pinello, A.~Bicchi, and A.~Sangiovanni-Vincentelli,
  ``Maximizing the stability radius of a set of systems under real-time
  scheduling constraints,'' \emph{IEEE Transactions on Automatic Control},
  vol.~50, no.~11, pp. 1790--1795, 2005.

\bibitem{Arzen2005b}
K.-E. {\AA}rz{\'e}n, A.~Cervin, and D.~Henriksson, ``Implementation-aware
  embedded control systems,'' in \emph{Handbook of Networked and Embedded
  Control Systems}.\hskip 1em plus 0.5em minus 0.4em\relax Birkh{\"a}user,
  2005.

\bibitem{Branicky2002}
M.~Branicky, S.~Phillips, and W.~Zhang, ``Scheduling and feedback co-design for
  networked control systems,'' in \emph{Proceedings of the 41st IEEE Conference
  on Decision and Control}, 2002, pp. 1211 -- 1217.

\bibitem{Cervin2003}
A.~Cervin, ``Integrated control and real-time scheduling,'' Ph.D. dissertation,
  Department of Automatic Control, Lund University, Sweden, 2003.

\bibitem{Marti2004}
P.~Marti, J.~Yepez, M.~Velasco, R.~Villa, and J.~Fuertes, ``Managing
  quality-of-control in network-based control systems by controller and message
  scheduling co-design,'' \emph{IEEE Transactions on Industrial Electronics},
  vol.~51, no.~6, pp. 1159 -- 1167, dec 2004.

\bibitem{Seto1996}
D.~Seto, J.~Lehoczky, L.~Sha, and K.~Shin, ``On task schedulability in
  real-time control systems,'' in \emph{Proc. of the 17th IEEE Real-Time
  Systems Symposium}, 1996.

\bibitem{Abeni2000}
L.~Abeni, L.~Palopoli, and G.~Buttazzo, ``On adaptive control techniques in
  real-time resource allocation,'' in \emph{Proc. of the 12th Euromicro
  Conference on Real-Time Systems}, 2000.

\bibitem{Naghshtabrizi2009}
P.~Naghshtabrizi and J.~P. Hespanha, ``Analysis of distributed control systems
  with shared communication and computation resource,'' in \emph{Proc. of the
  2009 American Control Conference}, 2009.

\bibitem{Samii2009}
S.~Samii, A.~Cervin, P.~Eles, and Z.~Peng, ``Integrated scheduling and
  synthesis of control applications on distributed embedded systems,'' in
  \emph{Proc. of the Design, Automation \& Test in Europe (DATE'09)}, 2009, pp.
  57--62.

\bibitem{Xia2006}
F.~Xia and Y.~Sun, ``Control-scheduling codesign: A perspective on integrating
  control and computing,'' \emph{Dynamics of Continuous, Discrete and Impulsive
  Systems}, vol.~13, no.~S1, pp. 1352--1358, 2006.

\bibitem{Velasco2008}
M.~Velasco, P.~Marti, and E.~Bini, ``Control-driven tasks: {M}odeling and
  analysis,'' in \emph{Proc. of the 2008 Real-Time Systems Symposium}, 2008.

\bibitem{Narendra1997}
K.~S. Narendra and J.~Balakrishnan, ``Adaptive control using multiple models,''
  \emph{IEEE Transactions on Automatic Control}, vol.~42, no.~2, pp. 171--187,
  1997.

\bibitem{Branicky1998}
M.~Branicky, ``Multiple lyapunov functions and other analysis tools for
  switched and hybrid systems,'' \emph{IEEE Transactions on Automatic Control},
  vol.~43, no.~4, pp. 475--482, 1998.

\bibitem{Liberzon1999}
D.~Liberzon and A.~S. Morse, ``Basic problems in stability and design of
  switched systems,'' \emph{IEEE Control Systems Magazine}, vol.~19, no.~5, pp.
  59--70, 1999.

\bibitem{Hespanha1999}
J.~P. Hespanha and A.~S. Morse, ``Stability of switched systems with average
  dwell-time,'' in \emph{Proc. of the 38th IEEE Conference on Decision and
  Control}, 1999.

\bibitem{Narendra2000}
K.~S. Narendra and C.~Xiang, ``Adaptive control of discrete-time systems using
  multiple models,'' \emph{IEEE Transactions on Automatic Control}, vol.~45,
  no.~9, pp. 1669--1686, 2000.

\bibitem{Liberzon2003}
D.~Liberzon, \emph{Switching in systems and control}.\hskip 1em plus 0.5em
  minus 0.4em\relax Birkh{\"a}user, 2003.

\bibitem{McCourt2010}
M.~McCourt and P.~J. Antsaklis, ``Control design for switched systems using
  passivity indices,'' in \emph{Proc. of the 2010 American Control Conference
  (ACC)}, 2010.

\bibitem{Rajhans2009}
A.~Rajhans, S.-W. Cheng, B.~R. Schmerl, D.~Garlan, B.~H. Krogh, C.~Agbi, and
  A.~Bhave, ``An architectural approach to the design and analysis of
  cyber-physical systems,'' \emph{ECEASST}, vol.~21, pp. 2--11, 2009.

\bibitem{Gao2007}
H.~Gao and T.~Chen, ``New results on stability of discrete-time systems with
  time-varying state delay,'' \emph{IEEE Transactions on Automatic Control},
  vol.~52, no.~2, pp. 328--334, 2007.

\bibitem{Xie2010}
D.~Xie and Y.~Wu, ``Stabilisation of time-delay switched systems with
  constrained switching signals and its applications in networked control
  systems,'' \emph{IET Control Theory Applications}, vol.~4, no.~10, pp.
  2120--2128, 2010.

\bibitem{McClamroch1996}
N.~McClamroch and I.~Kolmanovsky, ``A hybrid switched mode control approach for
  {V/STOL} flight control problems,'' in \emph{Proc. of the 35th IEEE
  Conference on Decision and Control}, 1996.

\bibitem{Johansson2003}
R.~Johansson and A.~Rantzer, Eds., \emph{Nonlinear and hybrid systems in
  automotive control}.\hskip 1em plus 0.5em minus 0.4em\relax Springer, 2003.

\bibitem{Morse1996}
A.~S. Morse, ``Supervisory control of families of linear set-point controllers
  {Part I. Exact matching},'' \emph{IEEE Transactions of Automatic Control},
  vol.~41, no.~10, pp. 1413--1431, 1996.

\bibitem{Voit2012}
H.~Voit, A.~Annaswamy, R.~Schneider, D.~Goswami, and S.~Chakraborty, ``Adaptive
  switching controllers for systems with hybrid communication protocols,'' in
  \emph{Proc. of the 2012 American Control Conference (ACC)}, 2012.

\bibitem{Goodwin1984}
G.~C. Goodwin and K.~S. Sin, \emph{Adaptive Filtering Prediction and Control},
  ser. Information and Systems Sciences.\hskip 1em plus 0.5em minus 0.4em\relax
  Prentice Hall, 1984.

\bibitem{Goodwin1980}
G.~C. Goodwin, P.~Ramadge, and P.~Caines, ``Discrete-time multivariable
  adaptive control,'' \emph{IEEE Transactions on Automatic Control}, vol.~25,
  no.~3, pp. 449--456, 1980.

\bibitem{Bai1985}
E.~Bai and S.~Sastry, ``Persistency of excitation, sufficient richness and
  parameter convergence in discrete time adaptive control,'' \emph{Systems \&
  Control Letters}, vol.~6, no.~3, pp. 153--163, 1985.

\bibitem{flexray}
``{The FlexRay Communication System Specifications, Ver. 2.1},''
  www.flexray.com.

\end{thebibliography}
}

\end{document}